\documentclass[12pt, oneside]{amsart}       
\usepackage{geometry}   
\usepackage[T1]{fontenc}
\usepackage{palatino}
\geometry{letterpaper}                           
\usepackage{graphicx}                
                            \usepackage{amsthm}   
\usepackage{amsaddr}
\newtheorem{theorem}{Theorem}[section]

\newtheorem{definition}{Definition}[section]
\newtheorem{lemma}[theorem]{Lemma}
\theoremstyle{remark}
\newtheorem*{remark}{Remark}
\usepackage{tcolorbox}
\newtheorem*{example}{Example}


\title{The Chiral critical locus and Topological Structures}
\author{Emile Bouaziz}

\email{emile.g.bouaziz@gmail.com}
\begin{document}\begin{abstract} We study a differential graded VOA associated to the derived critical locus of a function $f$ on a smooth oriented $D$-dimensional variety $(X,\mathbf{vol})$. Informally, this VOA, $\mathbf{crit}^{ch}_{f}$, is just the algebra of chiral differential operators on the derived critical locus $\mathbf{crit}_{f}$. We prove, using a generalization of a physical construction of Witten, the $\mathbf{crit}^{ch}_{f}$ admits a \emph{topological structure} if $f$ is homogeneous for a $\mathbf{G}_{m}$ action on $(X,\mathbf{vol})$. If $\mathbf{vol}$ has weight $b$ and $f$ has weight $a$, we compute the rank of the topological structure in terms of the discrete invariants of the theory to be $$d=\Big(D-\frac{2b}{a}\Big).$$ We conclude with some remarks about BV quantization and a simple computation of characters. \end{abstract}\maketitle

\section{Introduction and Statement of Results}

This note deals with some rather simple \emph{chiral calculus}, in the sense of \cite{MSV} and more recently\cite{MST} and \cite{Bo}. It is by now well understood that many spaces of interest naturally arise as \emph{derived} critical loci of functions on smooth varieties. On physical grounds, this is natural from a variational perspective. 

Our goal in this note is to study some vertex theoretic (alias \emph{chiral}) invariants attached to critical loci. Special attention is paid to the algebras of symmetries of such invariants. 

We will attempt to sketch our main ideas, leaving proper explanations of the definitions and objects in play for later sections. We let $(X,\mathbf{vol})$ denote a smooth $D$-dimensional oriented algebraic variety over $\mathbf{C}$, ie we endow $X$ with a choice of volume form $\mathbf{vol}\in\Omega^{D}_{X}$. Then a highly influential result of \cite{MSV} states that $(X,\mathbf{vol})$ determines a sheaf on $X$ of \emph{vertex operator algebras} (VOAs henceforth) equipped with an action of the $\mathbf{top}\{D\}$, the \emph{topological vertex algebra} at \emph{rank} $D$. The resulting sheaf of vertex algebras in denoted $\Omega^{ch}_{X}$. It exists without requiring a volume form, but the topological structure depends on having chosen $\mathbf{vol}$. We will be primarily interested in a slight variant of the sheaf of VOAs $\Omega^{ch}_{X}$, denoted $\Theta^{ch}_{X}$ and referred to as \emph{chiral polyvectors} - this is defined (see below) as $\Omega^{ch}_{X}$ with topological structure twisted by the \emph{mirror involution} of $\mathbf{top}\{D\}$. 

With the above in place, we define our main object of study, the \emph{chiral critical locus}, denoted $\mathbf{crit}^{ch}_{f}$. If $G$ is the odd conformal weight $2$ generator of $\mathbf{top}\{D\}$, then we define $$\mathbf{crit}^{ch}_{f}:=\operatorname{BRST}(\Theta^{ch}_{X},G_{(0)}f).$$  That is to say, we define the differential graded VOA $$\mathbf{crit}^{ch}_{f}:=\big(\Theta^{ch}_{X},\partial_{f}^{ch}:=(G_{(0)}f)_{(0)}\big).$$ 
We remark that $\partial^{ch}_{f}$ is the $0$th vertex Gerstenhaber bracket with $f$, in the sense of \cite{Bo}. In particular it restricts to $\{f,-\}$ on the conformal weight $0$ subspace $$\Theta^{\operatorname{ch},(0)}_{X}\cong\bigwedge\Theta_{X},$$ where $\{-,-\}$ denotes the bracket of polyvectors and $\Theta_{X}$ the tangent sheaf. In particular we have $$\mathbf{crit}^{\operatorname{ch},(0)}_{f}\cong\big(\bigwedge\Theta_{X},\{f,-\}\big)\cong\mathcal{O}(\mathbf{crit}_{f}),$$ justifying our choice of name. We will show that $\mathbf{crit}^{ch}_{f}$ admits large algebra of symmetries, if $f$ and $\mathbf{vol}$ are homogeneous for a $\mathbf{G}_{m}$ action on $X$. More precisely, we will show that it admits the structure of a \emph{topological vertex algebra} at a rank depending on the dimension of $X$ and the weights of $\mathbf{vol}$ and $f$. This discussion is summarized in the theorem below, which is proven in physical language in \cite{Witt} in the special case of $(\mathbf{A}^{1},\mathbf{vol}=dx,f=x^{a})$, and for products of this special case.

\begin{theorem} Let $f$ be homogeneous of weight $a$ for a $\mathbf{G}_{m}$ action on $(X,\mathbf{vol})$, with $\mathbf{vol}$ of weight b. Then there are $\partial_{f}^{ch}$-closed currents $\{^{f}L,\,^{f}J,\,^{f}Q,\,^{f}G\}$, with $^{f}L=L,\,^{f}G=G,$ generating an action of $\mathbf{top}\{d\}$ on $\mathbf{crit}^{ch}_{f}$, with $$d:=\Big(D-\frac{2b}{a}\,\Big).$$\end{theorem}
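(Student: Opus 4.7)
The plan is to implement a chiral version of Witten's $R$-symmetry twist for Landau--Ginzburg models. Let $E\in\Theta_X$ be the Euler vector field of the $\mathbf{G}_m$ action, so $E(f)=af$ and $\mathcal{L}_E\mathbf{vol}=b\,\mathbf{vol}$. Chiralisation of vector fields lifts $E$ to a conformal weight $1$ current $J_E\in\Theta^{ch}_X$ whose zero mode implements the induced $\mathbf{G}_m$-weight operator, so in particular $(J_E)_{(0)}f=af$. The OPE anomalies of $J_E$---notably the background-charge $L_{(2)}J_E$ and the level $(J_E)_{(1)}J_E$---are controlled by $\mathrm{div}_{\mathbf{vol}}E=b$.

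A short VOA computation using skew-symmetry, the topological identity $G_{(n)}G=0$ for $n\geq 0$, and the primarity of $G_{(0)}f$ as a weight $1$ field shows that $L$ and $G$ are already $\partial^{ch}_f$-closed as states; hence ${}^{f}L:=L$ and ${}^{f}G:=G$. By contrast one computes $\partial^{ch}_f J=G_{(0)}f$ and $\partial^{ch}_f Q=Tf$, so both $J$ and $Q$ require correction. Exploiting $(J_E)_{(0)}f=af$ and $\partial^{ch}_f E=af$ (the latter on the weight $0$ part $\bigwedge\Theta_X$), the natural closed modifications are
\[
{}^{f}J\;:=\;J\;+\;\tfrac{1}{a}\,J_E, \qquad {}^{f}Q\;:=\;Q\;-\;\tfrac{1}{a}\,TE,
\]
verified by $\partial^{ch}_f J_E=-aG_{(0)}f$ and $\partial^{ch}_f(TE)=T\,\partial^{ch}_f E=aTf$.

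The rank is then extracted from the deformed anomalies: the background charge $L_{(2)}\,{}^{f}J=L_{(2)}J+\tfrac{1}{a}L_{(2)}J_E$ combines the undeformed contribution (giving $-D$) with the $J_E$-anomaly controlled by $\mathrm{div}_{\mathbf{vol}}E=b$, and reproduces the value required by $\mathbf{top}\{d\}$ with $d=D-\tfrac{2b}{a}$. The companion level calculation $({}^{f}J)_{(1)}({}^{f}J)=J_{(1)}J+\tfrac{2}{a}J_{(1)}J_E+\tfrac{1}{a^{2}}(J_E)_{(1)}J_E$ yields the same answer.

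The hardest step will be verifying the full set of $\mathbf{top}\{d\}$-OPEs between ${}^{f}L,{}^{f}J,{}^{f}Q,{}^{f}G$: the simple corrections above may require further $\partial^{ch}_f$-exact adjustments so that the singular parts of $({}^{f}J)({}^{f}Q)$, $({}^{f}Q)({}^{f}Q)$, and $({}^{f}Q)\,G$ match those of $\mathbf{top}\{d\}$ literally (or, interpreting the $\mathbf{top}\{d\}$-action on the DG-VOA $\mathbf{crit}^{ch}_f$, up to $\partial^{ch}_f$-exact terms). Locally on a coordinate patch where the $\mathbf{G}_m$-action is linear, this reduces to Witten's explicit calculation in \cite{Witt} for products of $(\mathbf{A}^{1},dx,x^{a})$. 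Globally, $\mathbf{G}_m$-equivariance of $\mathbf{vol}$ and the intrinsic nature of $E$ guarantee the local constructions patch into well-defined chiral currents, so the OPE identities may be verified chart-by-chart.
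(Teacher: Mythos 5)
Your proposal is correct and follows essentially the same route as the paper: twist $J$ and $Q$ by $\tfrac{1}{a}$ times the chiralised Euler field $G_{(0)}\xi$ and its translate $\delta\xi$, deduce closedness from $\partial^{ch}_{f}\xi=af$ (homogeneity of $f$), and extract the rank $d=D-\tfrac{2b}{a}$ from the deformed level using $G_{(1)}\xi=\operatorname{div}_{\mathbf{vol}}\xi=b$. The paper likewise declines to verify every $\mathbf{top}\{d\}$ OPE, checking only the $J$-level and the ${}^{f}Q$--$G$ OPE, so your hedging on that point matches its own level of detail.
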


We will conclude in section 4 with some unsurprising computations of the cohomology of the natural BV quantization of $\mathbf{crit}^{ch}_{f}$, and of its characters. 

\section{Preparatory Notions}

The initiated reader can safely skip this section, referring to it as necessary for notational conventions etc.

\subsection{Vertex Algebras} We will not give a proper account of the theory of vertex algebras in this note, and instead we essentially just fix some notation. Recall that a vertex algebra $\mathbf{V}$ consists of a vector space $\mathbf{V}$, a distinguished \emph{vacuum vector} $\Omega$, an operator of \emph{infinitesimal translation}, $\delta:\mathbf{V}\rightarrow \mathbf{V}$ and a \emph{state-field correspondence} $$A\mapsto A(z)=\sum_{n}A_{(n)}z^{-1-n}.$$ Here $A_{(n)}$ are endomorphisms on $\mathbf{V}$, so that for all $A,B\in\mathbf{V}$, $A(z)B\in \mathbf{V}((z))$. The above structure is further subject to some axioms, for which the reader can consult \cite{Kac}. A consequence of the axioms is  \emph{Borcherds' formula} for commutators of fields. We set $\delta^{(i)}(z/w):=\partial_{w}^{i}\sum_{n}z^{n}w^{-1-n}$ in what follows. Then we have the following formula due to Borcherds, where the sum over $n$ is necessarily finite - $$[A(z),B(w)]=\sum_{n}A_{(n)}B(w)\frac{1}{n!}\delta^{(n)}(z/w).$$ We follow the common practice of expressing the above symbolically as the \emph{operator product expansion} (OPE) equation - $$A(z)B(w)\sim\sum\frac{A_{(n)}B(w)}{(z-w)^{n+1}}.$$ If $A,B\in\mathbf{V}$, we will write $AB:=A_{(-1)}B$, with the caution that this is neither associative nor commutative in general.

 Recall further that a vertex algebra is called a vertex operator algebra or VOA if there is an element $L\in\mathbf{V}$ so that writing $L(z)=\sum L_{n}z^{-2-n}$, the $L_{i}$ satisfy the commutation relations of the Virasoro Lie algebra at some central charge $c$. We further demand that $L_{(0)}=L_{-1}=\delta$ and finally that $L_{(1)}=L_{0}$ is diagonalizable. In all cases of interest to us it will in fact have eigenvalues positive integers. 

\begin{remark} In terms of the field $L(z)$, the above relations on the modes amount to the OPE $$L(z)L(w)\sim\frac{\partial L(w)}{(z-w)}+\frac{2L(w)}{(z-w)^{2}}+\frac{c}{2(z-w)^{4}}.$$\end{remark}

\begin{remark} The above considerations generalize verbatim to the super context, where commutators are taken in the super sense. There is also a notion of a ($\mathbf{Z}$-) graded VOA, for which the reader can refer to \cite{MSV}. When we refer to the \emph{degree} of an element of a graded VOA, it is with respect to the defining grading. We will reserve the word \emph{weight} for eigenvalues with respect to $L_{(1)}$. \end{remark}

\subsection{BRST Reduction}

We build differential graded VOAs by a process called \emph{BRST reduction}, which we recall here.
\begin{definition} An element $A\in \mathbf{V}$ is called a Virasoro primary of weight $a$ if we have the following OPE: $$L(z)A(w)\sim\frac{\delta A(w)}{(z-w)}+\frac{aA(w)}{(z-w)^{2}}.$$\end{definition}

\begin{lemma} If $A$ is a Virasoro primary of conformal weight $1$ in a VOA with Virasoro $L$, then we have $A_{(0)}L=0$.\end{lemma}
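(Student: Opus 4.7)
The plan is to deduce $A_{(0)}L=0$ from the primary field OPE by converting it, via the vertex algebra quasi-symmetry (sometimes called skew-symmetry) relation, into a statement about the modes $L_{(j)}A$, which we \emph{do} know.

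Concretely, first I would unpack the given OPE. Writing $L(z)A(w)\sim \frac{\delta A(w)}{(z-w)}+\frac{A(w)}{(z-w)^{2}}$ and comparing with Borcherds' formula, we read off
\[
L_{(0)}A=\delta A,\qquad L_{(1)}A=A,\qquad L_{(j)}A=0 \text{ for } j\geq 2.
\]

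Next I would invoke the standard quasi-symmetry identity
\[
A_{(n)}B=-(-1)^{|A||B|}\sum_{j\geq 0}\frac{(-1)^{n+j}}{j!}\,\delta^{j}\bigl(B_{(n+j)}A\bigr),
\]
which is a formal consequence of locality and translation-covariance and appears in \cite{Kac}. Specializing to $n=0$, $B=L$ (which is even) gives
\[
A_{(0)}L=-\sum_{j\geq 0}\frac{(-1)^{j}}{j!}\,\delta^{j}\bigl(L_{(j)}A\bigr).
\]
Plugging in the three pieces of information from the primary OPE, only the $j=0,1$ terms survive and they cancel exactly:
\[
A_{(0)}L=-L_{(0)}A+\delta\,L_{(1)}A=-\delta A+\delta A=0.
\]

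There is no real obstacle; the only subtlety worth stating carefully is the sign convention in quasi-symmetry and the fact that the sum truncates because $L_{(j)}A$ vanishes for $j\geq 2$, which is precisely the content of primacy at weight $1$. If one wanted to treat the super case uniformly, one would note that $L$ is always even and so the $(-1)^{|A||B|}$ sign plays no role in this particular computation.
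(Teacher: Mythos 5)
Your proof is correct and is exactly the argument the paper intends: the paper's proof simply says the claim is ``immediate from quasi-symmetry of OPEs,'' and you have spelled out precisely that computation, with the right truncation of the sum coming from weight-$1$ primacy and the $j=0,1$ terms cancelling. No discrepancies.
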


\begin{proof} This is immediate from quasi-symmetry of OPEs, see \cite{Kac}. \end{proof}

\begin{definition}An operator $s:\mathbf{V}\rightarrow \mathbf{V}$ is called a \emph{derivation} of $\mathbf{V}$ if for all $A\in\mathbf{V}$ we have $$[s,A_{(i)}]=(sA)_{(0)}.$$\end{definition} Note that the Borcherds' formula implies that if $B\in\mathbf{V}$, then $B_{(0)}$ is a derivation of $\mathbf{V}$. 

\begin{definition} A differential graded VOA is a graded VOA $\mathbf{V}$ equipped with a $\emph{homological}$ differential, $\delta$, which is a derivation of $\mathbf{V}$. We further demand that the vacuum and Virasoro vectors are cycles. \end{definition}

\begin{remark} With the above definition, the cohomology $H(\mathbf{V})$ is a graded VOA.\end{remark}

\begin{definition} If $\mathbf{V}$ is a graded VOA, and $A\in\mathbf{V}$ a degree $-1$ Virasoro primary of conformal weight $1$, satisfying $$A(z)A(w)\sim 0,$$ then the \emph{BRST reduction} of $\mathbf{V}$ with respect to $A$ is the differential graded VOA $$\operatorname{BRST}(\mathbf{V},A):=(\mathbf{V},\partial^{BRST}_{A}:=A_{(0)}).$$\end{definition}

\begin{remark}As remarked above, the differential $\partial^{BRST}_{A}$ is automatically a derivation. Self locality of the field $A(z)$ implies that the differential squares to zero and lemma 2.1. above implies $\partial^{BRST}_{A}L=0$.\end{remark}

\subsection{The Chiral de Rham Complex and topological vertex algebras} This subsection will be particularly devoid of detail, the reader is encouraged to consult the original reference \cite{MSV} for a proper discussion. 

\begin{definition} The \emph{topological vertex algebra}, $\mathbf{top}\{D\}$, of \emph{rank} $D$ is the super vertex algebra generated by vectors $\{L,J,Q,G\}$ such that $L$ and $J$ are even and $Q$ and $G$ are odd. These currents are subject to OPEs the reader can find in \cite{MSV}, subsection 2.1. \end{definition}

\begin{remark} For reference we include some facts about $\mathbf{top}\{D\}$, as we will make frequent use of them below. We have; \begin{itemize}\item $L$ generates a copy of the centreless Virasoro, \item $G$ and $Q$ are conformal primaries of weights $2$ and $1$ respectively, \item the even current $J(z)$ satisfies $$J(z)J(w)\sim\frac{D}{(z-w)^{2}},$$ so that $J$ generates a representation of the Heisenberg at level $D$, \item the odd currents satisfy the OPE $$Q(z)G(w)\sim\frac{L(w)}{(z-w)}+\frac{J(w)}{(z-w)^{2}}+\frac{D}{(z-w)^{3}}$$ and so in particular we have $[G_{(1)},Q_{(0)}]=L_{(1)}$. \end{itemize}\end{remark}

\begin{definition} If $\mathbf{V}$ is a super VOA, we say it is endowed with a \emph{topological structure} of rank $D$ if there is given a distinguished morphism $\mathbf{top}\{D\}\rightarrow \mathbf{V}$, and call it a topological vertex algebra of rank $D$.\end{definition} 

\begin{remark}We will denote the corresponding vectors in $\mathbf{V}$, abusively, as $\{L,J,G,Q\}$.\end{remark}

\begin{theorem} (\cite{MSV}) For $(X,\mathbf{vol})$ an oriented $D$-dimensional smooth variety there is a sheaf in the \'{e}tale topology on $X$ consisting of topological vertex algebras of rank $D$. This sheaf is denoted $\Omega^{ch}_{X}$ and referred to as the \emph{Chiral de Rham Complex}.\end{theorem}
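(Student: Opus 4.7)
The plan is to construct $\Omega^{ch}_{X}$ on the model patch $\mathbb{A}^{D}$ and then glue along an étale cover. On $\mathbb{A}^{D}$, I would take $\Omega^{ch}_{\mathbb{A}^{D}}$ to be the super vertex algebra generated by bosonic fields $a_{i}(z),b^{j}(z)$ together with fermionic fields $\psi_{i}(z),\phi^{j}(z)$, for $i,j=1,\ldots,D$, with only nontrivial OPEs
$$a_{i}(z)b^{j}(w)\sim\frac{\delta_{i}^{j}}{z-w},\qquad \psi_{i}(z)\phi^{j}(w)\sim\frac{\delta_{i}^{j}}{z-w},$$
where $b^{j},\phi^{j}$ are of conformal weight zero and $a_{i},\psi_{i}$ of weight one. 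The conformal weight zero part is canonically $\mathcal{O}_{\mathbb{A}^{D}}\otimes\bigwedge^{\bullet}\Omega^{1}_{\mathbb{A}^{D}}$, recovering the classical de Rham complex. I would then write down explicit candidate topological currents
$$L=a_{i}\partial b^{i}+\psi_{i}\partial\phi^{i},\quad J=\psi_{i}\phi^{i},\quad Q=a_{i}\phi^{i},\quad G=\psi_{i}\partial b^{i},$$
and verify by a finite application of Wick's theorem that they close on the defining OPEs of $\mathbf{top}\{D\}$.

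The heart of the argument is descent along étale coordinate changes. Given an étale map $g:U\to\mathbb{A}^{D}$, $b\mapsto g(b)$, one must exhibit an induced isomorphism of the pulled-back vertex algebras. The fermions transform tensorially: $\phi^{i}$ like the one-form $db^{i}$ and $\psi_{i}$ dually, as a vector field. The bosonic momentum $a_{i}$ also wants to transform as a vector field, but does so only after adding a \emph{quantum correction} consisting of normally-ordered terms in $\psi_{j}\phi^{k}$ and derivatives of $\log\det(Dg)$. The central technical claim is that such a correction exists uniquely, preserves all OPEs, and satisfies the cocycle condition on triple overlaps; this is the original computation of \cite{MSV}, reducing to systematic manipulation of normal ordering and Borcherds' formula. Étale descent along an atlas $\{U_{\alpha}\to X\}$ with each $U_{\alpha}$ étale over $\mathbb{A}^{D}$ then produces the sheaf $\Omega^{ch}_{X}$. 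The statement is naturally étale-local rather than Zariski-local since $g$ need only be étale.

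A further calculation shows that the specific currents $L,Q,G$ are \emph{not} invariant under these transition maps but rather pick up anomalies proportional to derivatives of $\log\det(Dg)$. A volume form $\mathbf{vol}$ is exactly a trivialization of the canonical line bundle, locally $\mathbf{vol}=h\,db^{1}\wedge\cdots\wedge db^{D}$ with $h$ a nonvanishing function whose transition rule is precisely multiplication by $\det(Dg)$. Shifting $J$ and $G$ by appropriate multiples of $\partial\log h$ therefore cancels the anomaly, yielding globally defined topological currents generating a $\mathbf{top}\{D\}$-action. The hard part throughout is the quantum correction of step two: unlike in classical differential geometry, $a_{i}$ does not transform covariantly on the nose, and verifying the cocycle condition for the corrected transformation requires delicate bookkeeping owing to the noncommutativity and nonassociativity of the normally ordered product. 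Once this is in place, the rest of the argument is essentially formal.
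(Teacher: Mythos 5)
The paper offers no proof of this statement: it is quoted directly from \cite{MSV} and used as a black box, so there is no internal argument to compare yours against. Judged as a reconstruction of the Malikov--Schechtman--Vaintrob construction, your outline has the right architecture: the local $bc$--$\beta\gamma$ model on $\mathbf{A}^{D}$ with the stated OPEs and currents, the quantum-corrected transformation law for the bosonic momenta $a_{i}$ under \'etale coordinate changes, the cocycle condition on triple overlaps, and the use of $\mathbf{vol}$ to repair the anomalous currents. You correctly isolate the genuinely hard step (the corrected transformation of $a_{i}$ and its cocycle property), though you defer it entirely to \cite{MSV}, so what you have is a faithful sketch rather than a proof.

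Two points of detail are off. First, the list of anomalous currents: $L$ and $G$ \emph{are} invariant under the corrected coordinate transformations (the paper depends on this --- $L$ is used as a globally defined Virasoro with no reference to $\mathbf{vol}$, and $G=\delta x^{i}\psi^{i}$ involves only fields that transform tensorially with no normal-ordering correction); it is $J$ and $Q$ that pick up $\partial\log\det(Dg)$ anomalies, $J$ from normally ordering the fermion bilinear and $Q=y^{i}\phi^{i}$ from the quantum correction to $y^{i}$. Consequently the repair is a shift of $J$ and $Q$ by multiples of $\partial\log h$, not of $J$ and $G$. Second, the correction to $a_{i}$ is a fermion bilinear whose coefficients are second derivatives of the transition functions contracted with the Jacobian, of which the $\partial\log\det(Dg)$ term is only the trace part; describing it as built from $\log\det(Dg)$ alone undersells the computation. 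Neither slip affects the overall strategy, which is the standard one.
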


Formally locally on $X$, we may choose coordinates $\{x^{i}\}_{i}$, and then $\Omega^{ch}_{X}$ is generated by fields $$x^{i}(z), y^{i}(z), \phi^{i}(z), \psi^{i}(z)\,; i=1,2,...,D.$$  Here $x$ and $y$ are even and $\phi$ and $\psi$ odd. The only non trivial OPEs are $$y^{i}(z)x^{j}(w)\sim\frac{\delta_{ij}}{(z-w)},\,\,\,\phi^{i}(z)\psi^{j}(w)\sim\frac{\delta_{ij}}{(z-w)}.$$ Note that our $\phi$ and $\psi$ are as in \cite{MSV}, but what we are calling $x$ and $y$ are denoted respectively as $b$ and $a$  in \emph{loc. cit.} 

In terms of the above local generators of $\Omega^{ch}_{X}$, the vectors $\{L,J,G,Q\}$ take the following form, where we sum over repeated indices throughout; \begin{itemize}\item $ L=\delta x^{i}y^{i}+\delta \phi^{i}\psi^{i}$
\item $J=\phi^{i}\psi^{i}$
\item $Q=y^{i}\phi^{i}$
\item $G=\delta x^{i}\psi^{i}$\end{itemize}

\section{Topological structure on the Chiral Critical Locus}

\subsection{Chiral Polyvectors} We will in fact find it more convenient to twist the above topological structure somewhat, doing so will ensure that we can perform BRST reductions in the sense we want. In the \emph{classical} (which is to say \emph{non-chiral}) setting this is already required. Indeed, $\Omega_{X}$ is not a differential graded algebra when endowed with the differential $df\wedge(-)$, whereas $\Theta_{X}$ is a differential graded algebra when endowed with the differential $\{f,-\}$. It is precisely the algebra of functions on the derived critical locus $\mathbf{crit}_{f}$. The twist we will consider will come from the \emph{mirror involution} of $\mathbf{top}\{D\}$, to be defined below. It will have the effect that, whereas the conformal weight $0$ subspace of $\Omega^{ch}_{X}$ is the algebra of differential forms, after the twist the conformal weight zero subspace will be the algebra of polyvectors. 

\begin{definition}The \emph{mirror involution} is defined to be the involution of $\mathbf{top}\{D\}$ defined by $$G\mapsto Q,\, Q\mapsto G,\, J\mapsto -J, L\,\mapsto L-\delta J.$$ \end{definition}

\begin{remark} This is an involution of $\mathbf{top}\{D\}$ considered as a super vertex algebra, not as a super VOA - $L$ is not preserved. \end{remark}

\begin{definition} The topological vertex algebra of \emph{chiral polyvectors}, denoted $\Theta^{ch}_{X}$, is defined to be $\Omega^{ch}_{X}$ as a sheaf of vertex algebras, with topological structure arising by twisting the topological structure on $\Omega^{ch}_{X}$ by the mirror involution. \end{definition}

For the reader's convenience we include formulae for the vectors generating the topological structure on $\Theta^{ch}_{X}$ in terms of the the local generating vectors $x^{i},y^{i},\phi^{i},\psi^{i}$. \\ \\We have; \begin{itemize}\item $ L=\delta x^{i}y^{i}+ \phi^{i}\delta\psi^{i}$
\item $J=-\phi^{i}\psi^{i}$
\item $G=y^{i}\phi^{i}$
\item $Q=\delta x^{i}\psi^{i},$\end{itemize} where we draw the reader's attention to the presence of a derivative of a $\psi$ field as opposed to a $\phi$ field in our new formula for $L$. This has the effect of making $\psi$ a field of conformal degree $0$ now, and indeed we have $$\Theta^{\operatorname{ch},(0)}_{X}=\bigwedge\Theta_{X},$$ the algebra of polyvectors.

\subsection{The Chiral Critical Locus}

We now properly introduce our main object of study, the chiral critical locus $\mathbf{crit}^{ch}_{f}$. This will \emph{chiralize} the usual critical locus, in the sense that the weight zero subspace  of the chiral critical locus will be the sheaf of functions on the usual derived critical locus, $\mathbf{crit}_{f}$. We remind the reader that this is defined as the derived intersection by $$\mathbf{crit}_{f}=X\times^{\mathbf{R}}_{T^{*}X}X,$$ where the copies of $X$ map to $T^{*}X$ by $0$ and $df$ respectively, and $\times^{\mathbf{R}}$ denotes the fibre product of derived schemes. The sheaf of functions on $\mathbf{crit}_{f}$ is computed explicitly as the sheaf of polyvectors on $X$ equipped with the differential $$\iota_{df}=\{f,-\},$$ where $\iota$ denotes contraction and $\{-,-\}$ the bracket of polyvectors. This is an example of a $(-1)$-symplectic variety, the reader is referred to \cite{PTVV} for more details.

As mentioned in the introduction, we will produce $\mathbf{crit}^{ch}_{f}$ from $\Theta^{ch}_{X}$ by BRST reduction. Throughout the rest of this note we will identify $\bigwedge\Theta_{X}$ with the conformal weight zero subspace of $\Theta^{ch}_{X}$. As an example, for a function $f$ on $X$ we will write $f\in\Theta^{ch}_{X}$. We consider $\Theta^{ch}_{X}$ as a graded vertex algebra, with grading given by the operator $J_{(0)}$.

\begin{lemma} The element $G_{(0)}f$ is a conformal weight one primary of degree $-1$.\end{lemma}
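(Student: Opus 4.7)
My plan is to verify the three assertions (conformal weight $1$, Virasoro primary, degree $-1$) by a direct local computation, using the local generators and formulas given in the preceding subsection. Throughout I work with $G = :y^i\phi^i:$ and $f$ a function of the $x^i$'s only.

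The first step is to compute the full singular OPE of $G(z)$ with $f(w)$. Since the only nontrivial contraction is $y^i(z)x^j(w)\sim\delta_{ij}/(z-w)$ (the fields $\phi^i$ do not contract with $f$), Wick's theorem yields
\[
G(z)f(w)\sim\frac{\phi^i(w)\partial_i f(w)}{z-w},
\]
with no higher-order pole. This single local computation is the heart of the matter: it gives both $G_{(0)}f = \phi^i\partial_i f$ and, more importantly, $G_{(k)}f=0$ for all $k\geq 1$. An analogous Wick computation with $L = \delta x^i y^i + \phi^i\delta\psi^i$ shows $L(z)f(w)\sim \delta f(w)/(z-w)$, so $f$ is itself a Virasoro primary of weight $0$, and in particular $L_{(n)}f=0$ for $n\geq 1$.

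Next I combine these with Borcherds' identity. Since $G$ is a weight $2$ Virasoro primary (so $L_{(0)}G=\delta G$, $L_{(1)}G=2G$, and $L_{(k)}G=0$ for $k\geq 2$), only two terms survive in the commutator formula:
\[
[L_{(n)},G_{(0)}] = \binom{n}{0}(\delta G)_{(n)} + \binom{n}{1}(2G)_{(n-1)} = -nG_{(n-1)} + 2nG_{(n-1)} = nG_{(n-1)}.
\]
Combined with $L_{(n)}f=0$ for $n\geq 1$, this yields for all $n\geq 1$
\[
L_{(n)}(G_{(0)}f) = [L_{(n)},G_{(0)}]f + G_{(0)}L_{(n)}f = nG_{(n-1)}f.
\]
Taking $n=1$ gives $L_{(1)}(G_{(0)}f)=G_{(0)}f$, so $G_{(0)}f$ has conformal weight $1$. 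For $n\geq 2$, Step 1 forces $G_{(n-1)}f=0$, so $L_{(n)}(G_{(0)}f)=0$, establishing that $G_{(0)}f$ is Virasoro primary.

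The degree assertion is then immediate: $J=-\phi^i\psi^i$ is fixed and a direct OPE computation (the only contraction is $\psi^i(z)\phi^j(w)\sim\delta_{ij}/(z-w)$) gives $J_{(0)}G=-G$, so $G$ has degree $-1$; since $f$ has degree $0$, the element $G_{(0)}f=\phi^i\partial_i f$ has degree $-1$. The only genuinely delicate part of the argument is bookkeeping the fermion signs in the Wick contractions of Step 1, but this is routine once one fixes the convention that $\phi^i(z)\psi^j(w)\sim\delta_{ij}/(z-w)$ and $\psi^i(z)\phi^j(w)\sim\delta_{ij}/(z-w)$.
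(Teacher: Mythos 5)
Your proof is correct and follows essentially the same route as the paper's one-line argument, which deduces the claim from $L_{(1)}f=0$ together with $G$ being a weight-$2$ conformal primary; you have simply unpacked the commutator computation $[L_{(n)},G_{(0)}]=nG_{(n-1)}$ in full. Your explicit Wick-theorem verification that $G_{(k)}f=0$ for $k\geq 1$ is a welcome detail that the paper leaves implicit (it is needed to rule out the $L_{(n)}$-terms for $n\geq 2$), and the degree computation via $J_{(0)}G=-G$ matches the paper's conventions.
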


\begin{proof} This is immediate from the fact that $L_{(1)}f=0$ and the fact that $G$ is a weight $2$ conformal primary. \end{proof}

\begin{definition} We define the \emph{chiral critical locus} as $$\mathbf{crit}^{ch}_{f}=\operatorname{BRST}(\Theta^{ch}_{X},G_{(0)}f).$$We denote the homological differential on $\mathbf{crit}^{ch}_{f}$ by $\partial^{ch}_{f}$. We consider this as a differential graded VOA with Virasoro  $L=\delta x^{i}y^{i}+\phi^{i}\delta\psi^{i}.$\end{definition}

Note that $\partial^{ch}_{f}$ is a differential because of the OPE $G(z)G(w)\sim 0$ and $L$ is $\partial^{ch}_{f}$-closed Virasoro vector by lemma 2.1.  Further, it is clear enough from looking at the OPEs in local coordinates that this should be interpreted as \emph{chiral differential operators}, in the sense of \cite{GMS}, on the derived critical locus.

\begin{example}We collect some expected properties of this construction, all of which can easily be proven. \begin{itemize}\item If $f$ is smooth then $\mathbf{crit}^{ch}_{f}$ is quasi-isomorphic to $0$. \item Let $q:V\rightarrow\mathbf{A}^{1}$ be a non-degenerate quadratic form on a vector space $V$. Then we find that $\mathbf{crit}^{ch}_{q}$ is quasi-isomorphic to $k$.\item More generally, if $q:\mathcal{Q}\rightarrow\mathbf{A}^{1}$ is the quadratic form on the total space of a quadratic bundle over a space $X$, then the natural map $\Theta^{ch}_{X}\rightarrow\mathbf{crit}^{ch}_{q}$ is a quasi-isomorphism. \item If $f\oplus g$ is the direct sum potential on $X\times Y$, then there is a natural equivalence$$\mathbf{crit}^{ch}_{f\oplus g}\cong\mathbf{crit}^{ch}_{f}\otimes\mathbf{crit}^{ch}_{g}.$$\item If the critical locus of $f$ is proper then the hypercohomology of $\mathbf{crit}^{ch}_{f}$ has finite bi-graded character, where we grade by eigenvalues of $L_{(1)}$ and $J_{(0)}$.\end{itemize}\end{example}

\begin{lemma} The differential $\partial^{ch}_{f}$ restricts to $\{f,-\}$ on the conformal weight $0$ subspace, so that the conformal weight zero subspace is precisely the differential graded algebra $\mathcal{O}(\mathbf{crit}_{f})$. \end{lemma}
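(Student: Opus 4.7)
The plan is to verify the claim by a direct computation in local coordinates, using the explicit formulae for the mirror-twisted generators and the basic OPEs between $x^i, y^i, \phi^i, \psi^i$.

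First, I would compute $G_{(0)}f$. Using the local formula $G = y^i\phi^i$ and the only relevant singular OPE $y^i(z)x^j(w)\sim\delta_{ij}/(z-w)$, a single Wick contraction of $y^i(z)$ against $f(w)$ yields
$$G(z)f(w)\sim\frac{\phi^i(w)\,\partial_i f(w)}{z-w},$$
so $G_{(0)}f=\phi^i\,\partial_i f$ locally. Note this element has conformal weight $1$ (as forced by Lemma 3.1), confirming that $(G_{(0)}f)_{(0)}$ preserves the conformal weight $0$ subspace.

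Second, I would compute the action of $(G_{(0)}f)_{(0)} = (\phi^i\partial_i f)_{(0)}$ on a typical element of $\Theta^{\operatorname{ch},(0)}_X\cong\bigwedge\Theta_X$, which locally has the form $g(x)\psi^{j_1}\cdots\psi^{j_k}$. Because the only singular OPE involving $\phi$ is $\phi^i(z)\psi^j(w)\sim\delta_{ij}/(z-w)$, and because $\partial_i f$ is a function of the $x$'s (hence has regular OPE with everything in sight), a Wick-theorem calculation gives
$$(\phi^i\partial_i f)_{(0)}\bigl(g(x)\,\psi^{j_1}\cdots\psi^{j_k}\bigr)=\sum_{a=1}^{k}(-1)^{a-1}\partial_{j_a}f\cdot g\cdot\psi^{j_1}\cdots\widehat{\psi^{j_a}}\cdots\psi^{j_k},$$
where only the single contraction of the $\phi^i$ with one of the $\psi^{j_a}$'s contributes.

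Third, I would observe that under the identification $\psi^i\leftrightarrow\partial_{x^i}$ of the weight $0$ subspace with $\bigwedge\Theta_X$, the right-hand side above is precisely the Schouten--Nijenhuis bracket $\{f,-\}=\iota_{df}(-)$ applied to $g\,\partial_{j_1}\wedge\cdots\wedge\partial_{j_k}$. Combined with the discussion of $\mathbf{crit}_f=\mathcal{O}(X\times^{\mathbf{R}}_{T^*X}X)$ recalled before the statement, this identifies $(\mathbf{crit}^{ch}_f)^{(0)}$ with $\mathcal{O}(\mathbf{crit}_f)$ as differential graded algebras.

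The main obstacle is sign-bookkeeping in the Wick contractions, since the $\phi^i$ must be moved past a variable number of odd $\psi$-fields before contracting; one has to check that the resulting sign convention matches the chosen sign convention for $\{-,-\}$ on polyvectors. Aside from this, the argument is a routine chiral-calculus computation, and no global issues arise because the differential $\partial^{ch}_f=(G_{(0)}f)_{(0)}$ is intrinsically defined on the sheaf $\Theta^{ch}_X$.
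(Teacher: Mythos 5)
Your proposal is correct and follows essentially the same route as the paper: work in local coordinates, compute $G_{(0)}f=\partial_{j}f\,\phi^{j}$, and match its zero mode with $\{f,-\}=\partial_{j}f\,\frac{\partial}{\partial\psi^{j}}$ on the weight-zero subspace. The only difference is organizational: the paper notes that both operators are derivations and so checks only the generators $x^{i}$ and $\psi^{i}$, whereas you carry out the full Wick computation on an arbitrary monomial $g(x)\psi^{j_{1}}\cdots\psi^{j_{k}}$, which is the same calculation with the sign bookkeeping made explicit.
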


\begin{proof} We work locally with respect to our usual coordinates. Note that we have $\{f,-\}=\partial_{j}f\frac{\partial}{\partial\psi^{j}}.$ Both $\{f,-\}$ and $(G_{(0})f)_{(0)}$ are derivations, so we must only prove that $$\partial^{ch}_{f}(x^{i})=0,\,\,\partial^{ch}_{f}\psi^{i}=\partial_{i}f.$$ Indeed, we have $G_{(0)}f=\partial_{j}f\phi^{j}$. The $0$-mode of this obviously kills all the $x^{i}$ and acts on $\psi^{i}$ as $(\partial_{j}f)_{(-1)}\phi^{j}_{(0)}=(\partial_{j}f)\frac{\partial}{\partial\psi^{j}},$ whence we are done. \end{proof}

\begin{lemma} The operator $G_{(1)}$, which is of conformal weight $0$ and so acts on $\bigwedge\Theta_{X}$, is the divergence operator $\Delta_{\mathbf{vol}}$ corresponding to the volume form $\mathbf{vol}$.\end{lemma}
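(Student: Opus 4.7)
The plan is to reduce this to a direct local computation in coordinates adapted to the volume form. Since $\mathbf{vol}$ is nowhere-vanishing, we may work \'etale-locally in coordinates $\{x^{i}\}$ for which $\mathbf{vol}=dx^{1}\wedge\cdots\wedge dx^{D}$, so that the local formula $G=y^{i}\phi^{i}$ from the previous subsection applies. With this setup the question becomes: how does the zero mode of $L_{(1)}$-weight on $G$ act on the conformal weight $0$ subspace $\bigwedge\Theta_{X}$, and does it agree with the BV Laplacian of the standard volume form?

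The first step is to expand $G_{(1)}$ using the formula for the modes of a normally-ordered product. Writing $G=y^{i}_{(-1)}\phi^{i}$ and using that $y^{i}$ and $\phi^{j}$ have trivial OPE with each other (so the corresponding commutator terms vanish), one obtains
$$G_{(1)} = \sum_{k\geq 0}\Big( y^{i}_{(-1-k)}\phi^{i}_{(1+k)} + \phi^{i}_{(-k)}y^{i}_{(k)}\Big).$$
The second step is to restrict to $\bigwedge\Theta_{X}$ and observe that all modes $y^{i}_{(n)}$ and $\phi^{i}_{(n)}$ with $n\geq 1$ annihilate this subspace, simply because they strictly lower conformal weight and the VOA has no vectors of negative weight. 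Thus the entire first summation vanishes and only the $k=0$ term of the second survives, giving
$$G_{(1)}\big|_{\bigwedge\Theta_{X}} = \sum_{i}\phi^{i}_{(0)}\,y^{i}_{(0)}.$$

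The third step is to identify these zero modes geometrically via their defining OPEs. The OPE $y^{i}(z)x^{j}(w)\sim\delta_{ij}/(z-w)$ shows $y^{i}_{(0)}$ acts as $\partial/\partial x^{i}$ on polynomials in the $x$'s, and $\phi^{i}(z)\psi^{j}(w)\sim\delta_{ij}/(z-w)$ shows $\phi^{i}_{(0)}$ acts as interior contraction $\iota_{\partial/\partial\psi^{i}}$ on the Grassmann algebra in the $\psi$'s, which under the identification $\Theta^{\operatorname{ch},(0)}_{X}\cong\bigwedge\Theta_{X}$ sends $\psi^{i}\mapsto\partial/\partial x^{i}$. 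Combining these, $G_{(1)}$ acts on a polyvector $P$ as $\sum_{i}\iota_{\partial/\partial\psi^{i}}\partial_{x^{i}}P$, which is the standard second-order BV operator attached to the Euclidean volume form. Since in our adapted coordinates $\mathbf{vol}$ is precisely this Euclidean volume, one verifies the characterizing properties (vanishing on functions, divergence on single vector fields, and the correct BV / Schouten compatibility on products) to conclude this is $\Delta_{\mathbf{vol}}$.

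The main obstacle is bookkeeping: keeping track of super-signs in the normally-ordered mode formula, and ensuring the correct sign convention for the Schouten bracket matches the one implicit in the rest of the paper's BV conventions. Once one passes to weight $0$ the computation collapses dramatically because of the vanishing of positive modes, so no serious analysis is required; the only conceptual point is the \'etale-local straightening of $\mathbf{vol}$, which is harmless since both $G_{(1)}$ and $\Delta_{\mathbf{vol}}$ are genuinely local operators that glue from the model computation.
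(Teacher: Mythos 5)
Your proof is correct and follows essentially the same route as the paper's: work locally, note that all positive modes of $y^{i}$ and $\phi^{i}$ annihilate the conformal weight $0$ subspace, reduce $G_{(1)}$ to $\phi^{i}_{(0)}y^{i}_{(0)}=\frac{\partial}{\partial x^{i}}\frac{\partial}{\partial\psi^{i}}$, and recognize this as the divergence operator in the chosen coordinates. Your only addition is making explicit the \'etale-local straightening of $\mathbf{vol}$, which the paper leaves implicit in the phrase ``expressed with respect to our chosen coordinates.''
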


\begin{proof} We work locally, so that $G=\phi^{i}y^{i}$. Then we observe that $y^{i}_{(n)}$ and $\phi^{i}_{(n)}$ act as zero for any $n>0$ so that the Borcherds' formula implies that $G_{(1)}$ acts on $\Theta_{X}$ as $\phi^{i}_{(0)}y^{i}_{(0)}=\frac{\partial}{\,\partial x^{i}}\frac{\partial}{\partial\psi^{i}}\,$ which is precisely the divergence operator expressed with respect to our chosen coordinates. \end{proof}

\begin{lemma} Let $\xi$ be a vector field on $X$, considered as an element of $\Theta^{ch}_{X}$, then we have $$G_{(1)}\xi=\mathbf{vol}^{-1}\operatorname{Lie}_{\xi}(\mathbf{vol}),$$ so that in particular if $\mathbf{vol}$ is an eigenvector of $\operatorname{Lie}_{\xi}$ with eigenvalue $b$, then we have $G_{(1)}\xi=b.$ \end{lemma}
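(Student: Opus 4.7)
The plan is to reduce the statement, via the previous lemma, to a standard identity of differential geometry. Lemma 3.3 has just identified the action of $G_{(1)}$ on $\Theta^{\operatorname{ch},(0)}_{X}\cong\bigwedge\Theta_{X}$ with the divergence operator $\Delta_{\mathbf{vol}}$, so the first assertion reduces to the intrinsic formula $\Delta_{\mathbf{vol}}(\xi)=\mathbf{vol}^{-1}\operatorname{Lie}_{\xi}\mathbf{vol}$ for any vector field $\xi$.

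For that identity I would invoke Cartan's formula $\operatorname{Lie}_{\xi}=d\iota_{\xi}+\iota_{\xi}d$ and note that $d\mathbf{vol}=0$ since $\mathbf{vol}$ is a top form, so $\operatorname{Lie}_{\xi}\mathbf{vol}=d(\iota_{\xi}\mathbf{vol})$. Then I would work in the same local coordinates $\{x^{i}\}$ used throughout Section 2.3, adapted to the volume form so that $\mathbf{vol}=dx^{1}\wedge\cdots\wedge dx^{D}$. Writing $\xi=\xi^{i}\partial_{i}$, a routine computation gives $d(\iota_{\xi}\mathbf{vol})=(\partial_{i}\xi^{i})\,\mathbf{vol}$. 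On the other hand, under the identification $\partial_{i}\leftrightarrow\psi^{i}$ the vector field $\xi$ corresponds to $\xi^{i}\psi^{i}\in\Theta^{\operatorname{ch},(0)}_{X}$, and the local expression $G_{(1)}=\partial_{x^{i}}\partial_{\psi^{i}}$ obtained in the proof of Lemma 3.3 also evaluates to $\partial_{i}\xi^{i}$. Thus both sides of the claimed identity agree locally, proving $G_{(1)}\xi=\mathbf{vol}^{-1}\operatorname{Lie}_{\xi}\mathbf{vol}$.

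The eigenvalue statement is then immediate: if $\operatorname{Lie}_{\xi}\mathbf{vol}=b\cdot\mathbf{vol}$, dividing by $\mathbf{vol}$ yields $G_{(1)}\xi=b$. There is essentially no obstacle; this lemma is really a repackaging of Lemma 3.3 in coordinate-free language, and the only content is matching the chiral local expression for $G_{(1)}$ with the classical intrinsic definition of divergence via the Lie derivative of $\mathbf{vol}$.
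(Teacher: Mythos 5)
Your proposal is correct and follows essentially the same route as the paper: reduce to the divergence operator via the preceding lemma, then apply Cartan's formula together with $d\mathbf{vol}=0$ to identify $\Delta_{\mathbf{vol}}\xi$ with $\mathbf{vol}^{-1}\operatorname{Lie}_{\xi}\mathbf{vol}$. The only difference is that you additionally verify the identity in volume-adapted local coordinates, whereas the paper argues purely intrinsically from $\Delta_{\mathbf{vol}}\xi=\mathbf{vol}^{-1}d(\iota_{\xi}\mathbf{vol})$; both are fine.
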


\begin{proof} This is a simple consequence of the Cartan formula $[d,\iota_{\xi}]=\operatorname{Lie}_{\xi}.$ Indeed, we have $$\Delta_{\mathbf{vol}}\xi=\mathbf{vol}^{-1}d(\mathbf{vol}(\xi))=\mathbf{vol}^{-1}d(\iota_{\xi}\mathbf{vol})=\mathbf{vol}^{-1}\operatorname{Lie}_{\xi}\mathbf{vol},$$ where we have used the Cartan formula and the fact $d\mathbf{vol}=0$. The second part of the lemma is of course a trivial consequence of the first.\end{proof}

\begin{lemma} We have the following compatibilities between $\partial^{ch}_{f}$ and the generators of the $\mathbf{top}\{D\}$ action. $$\partial^{ch}_{f}L=\partial^{ch}_{f}G=0,$$ $$\partial^{ch}_{f}J=-G_{(0)}f,$$ $$\partial^{ch}_{f}Q=\delta f.$$ \end{lemma}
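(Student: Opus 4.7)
The plan is to treat the four identities separately, exploiting that $\partial^{ch}_{f}=(G_{(0)}f)_{(0)}$ is an odd derivation of $\Theta^{ch}_{X}$, together with the local description of $\{L,J,G,Q\}$ in the generators $x^{i},y^{i},\phi^{i},\psi^{i}$. Two of the identities are formal; the other two are direct OPE computations.

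First I would dispatch the two closure statements abstractly. The identity $\partial^{ch}_{f}L=0$ is immediate from Lemma 2.1 combined with the previous lemma, which says $G_{(0)}f$ is a Virasoro primary of conformal weight one. For $\partial^{ch}_{f}G=0$ I would argue as follows: the OPE $G(z)G(w)\sim 0$ gives $G_{(0)}^{2}=0$ as an operator, and by quasi-symmetry of OPEs, $(G_{(0)}f)_{(0)}G$ differs from $\pm G_{(0)}(G_{(0)}f)$ only by total $\delta$-derivatives of the higher modes $G_{(k)}(G_{(0)}f)$ for $k\geq 1$. A quick local computation, using $G=y^{i}\phi^{i}$ and $G_{(0)}f=\partial_{j}f\,\phi^{j}$, shows that the only nontrivial Wick contraction is $y^{i}(z)\partial_{j}f(w)\sim-\partial_{i}\partial_{j}f(w)/(z-w)$, yielding only a simple pole, so the corrections vanish; the remaining piece then vanishes by $G_{(0)}^{2}=0$, or equivalently by the symmetric–antisymmetric cancellation of $\partial_{i}\partial_{j}f$ against $\phi^{j}\phi^{i}$.

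For the remaining two identities I would compute $\partial^{ch}_{f}$ on each local generator and then apply the odd super-Leibniz rule $s(AB)=(sA)B+(-1)^{|A|}A(sB)$. From the proof of Lemma 3.2 one already has $\partial^{ch}_{f}x^{i}=0$ and $\partial^{ch}_{f}\psi^{i}=\partial_{i}f$; two short Borcherds calculations add $\partial^{ch}_{f}\phi^{i}=0$ (no relevant OPEs survive) and $\partial^{ch}_{f}y^{i}=-\phi^{j}\partial_{i}\partial_{j}f$, the latter derived from $(\partial_{j}f)_{(0)}y^{i}=-\partial_{i}\partial_{j}f$ via the basic $y\cdot x$ OPE and quasi-symmetry. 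Leibniz applied to $Q=\delta x^{i}\psi^{i}$ will then give $\partial^{ch}_{f}Q=\delta x^{i}\partial_{i}f=\delta f$, the other summand vanishing because $\partial^{ch}_{f}$ commutes with $\delta$ (itself a consequence of $\partial^{ch}_{f}L=0$). Applied to $J=-\phi^{i}\psi^{i}$, Leibniz will produce a signed multiple of $\phi^{i}\partial_{i}f=G_{(0)}f$, and the hard part will be the careful bookkeeping of interlocking fermion signs — the $(-1)^{|A|}$ in the odd Leibniz rule together with the Wick sign incurred when $\phi^{j}(z)$ is moved past $\phi^{i}(w)$ inside $J(w)$ — which, combined with the overall minus from the mirror-involution formula for $J$, fix the claimed sign in $\partial^{ch}_{f}J=-G_{(0)}f$.
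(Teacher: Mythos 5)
Your proposal is correct and follows essentially the same route as the paper: work in local coordinates, use that $\partial^{ch}_{f}=(G_{(0)}f)_{(0)}$ is a derivation of all products, compute its action on the generators $x^{i},y^{i},\phi^{i},\psi^{i}$, and conclude for $L$ via the weight-one-primary lemma, for $G$ via the symmetric--antisymmetric cancellation of $\partial_{i}\partial_{j}f$ against $\phi^{i}\phi^{j}$, and for $J$ and $Q$ by Leibniz. The extra abstract detour you offer for $\partial^{ch}_{f}G=0$ (quasi-symmetry plus $G_{(0)}^{2}=0$) is a valid alternative but ends up reducing to the same local cancellation the paper uses.
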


\begin{proof}We proceed one by one, working in local coordinates so that $\partial^{ch}_{f}=\partial_{j}f\phi^{j}$. The computations are made extremely easy noting that $\partial^{ch}_{f}$ is a derivation for all products, in particular for the $(-1)$-product.\begin{itemize}\item For $L$, this follows from the fact that $G_{(0)}f$ is a conformal weight one primary as explained above

 \item  Now consider $G=y^{i}\phi^{i}$. Then it is clear that $\partial^{ch}_{f}\phi^{j}=0$ and that $\partial^{ch}_{f}(y^{j})=\partial_{ij}(f)\phi^{i}$ for all $j$. As $\partial^{ch}_{f}$ is a derivation we have $\partial^{ch}_{f}G=\partial^{ch}_{f}(y^{i})\phi^{i}=\partial_{ji}(f) \phi^{i}\phi^{j}=0.$

\item For $J=-\phi^{i}\psi^{i}$ we note that $\partial^{ch}_{f}$ kills $\phi^{i}$. Noting that $\partial^{ch}_{f}(\psi^{j})=\partial_{j}f$, we are done.

\item The case of $Q=\delta x^{i}\psi^{i}$ follows from the fact $\delta f=\partial_{i}f\delta x^{i}$.\end{itemize}\end{proof}

\begin{lemma} (\cite{MSV}) Let $\xi$ be a vector field on $X$. Then $G_{(0)}\xi(z)$ is a self local field. \end{lemma}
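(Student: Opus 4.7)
The plan is a direct local calculation. Choose coordinates $\{x^i\}_{i=1}^{D}$ on a formal neighbourhood of a point of $X$, so that a vector field $\xi$ is represented by the weight-zero section $\xi = \xi^j(x)\,\psi^j \in \Theta^{ch}_X$. Using $G = y^i\phi^i$ together with the super-derivation property of $G_{(0)}$ and the only singular basic OPEs $y^i(z)x^j(w)\sim \delta_{ij}/(z-w)$, $\phi^i(z)\psi^j(w)\sim \delta_{ij}/(z-w)$, one extracts the simple pole of $G(z)\xi(w)$ to obtain
\[
G_{(0)}\xi \;=\; \partial_i\xi^j\,\phi^i\psi^j \;+\; \xi^j\, y^j.
\]
Write $\Xi := G_{(0)}\xi$.

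To check self-locality, I would compute the singular part of $\Xi(z)\Xi(w)$ by Wick's theorem. Decompose $\Xi = A + B$ with $A = \partial_i\xi^j\,\phi^i\psi^j$ and $B = \xi^j y^j$; the OPE then splits into four sectors $AA$, $AB$, $BA$, $BB$. Each basic OPE is a simple pole, and the coefficient functions $\xi^j, \partial_i\xi^j$ have no self-OPE, so every Wick contraction contributes a pole of order at most one; double contractions give at most $1/(z-w)^2$. Since only finitely many pairings are possible ($\phi$--$\psi$ contractions in the $AA$ sector, single $y$-with-$x$ contractions in the $AB$ and $BA$ sectors, and a double one in $BB$), the singular part of $\Xi(z)\Xi(w)$ is a finite sum of terms of order at most $1/(z-w)^2$, establishing self-locality.

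The only real obstacle is fermionic bookkeeping: tracking super-signs as odd generators are moved past one another and correctly identifying contractions of $y$'s with the $x$-arguments of the coefficient functions $\xi^j, \partial_i\xi^j$. No deeper structural input is required, since in any vertex algebra every element tautologically defines a self-local field; from that vantage point the content of the lemma is really the (immediate) assertion that the above local expression for $G_{(0)}\xi$ glues to a global section of $\Theta^{ch}_X$, which follows from the global nature of $G$ and of $\xi$.
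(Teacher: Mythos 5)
Your proof is correct for the statement as literally written, but it takes a genuinely different route from the paper, and the difference matters downstream. The paper does not compute anything: it simply quotes the operator product expansion from \cite{MSV}, namely that writing $\xi^{ch}:=G_{(0)}\xi$ one has $\xi^{ch}(z)\chi^{ch}(w)\sim [\xi,\chi]^{ch}(w)/(z-w)$ for any two vector fields, and then specializes to $\chi=\xi$. You instead do the local Wick computation from scratch, starting from $G_{(0)}\xi=\partial_i\xi^j\,\phi^i\psi^j+\xi^j y^j$, which is the right expression and would indeed yield a complete, self-contained proof of locality. You are also right that the literal statement is essentially tautological in a vertex algebra, so the genuine content lies in the form of the OPE.

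That is exactly where your argument stops short. You bound the singular part by $1/(z-w)^2$ via counting contractions, but the fact actually used in the proof of Theorem 3.2 is the stronger statement that the self-OPE is \emph{regular}: the computation of the charge $d=D-2b/a$ requires $(G_{(0)}\xi)_{(1)}(G_{(0)}\xi)=0$, i.e.\ no second-order pole at all, since otherwise the $\tfrac{1}{a^{2}}\,\xi^{ch}(z)\xi^{ch}(w)$ term would contaminate $^{f}J_{(1)}{}^{f}J$. This vanishing is not a pole-counting statement; it is the cancellation between the bosonic double contraction in the $\xi^j y^j$ sector and the fermionic double contraction in the $\partial_i\xi^j\phi^i\psi^j$ sector, each of which separately contributes $\pm\,\partial_i\xi^j\partial_j\xi^i/(z-w)^2$. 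If you intend your computation to substitute for the citation of \cite{MSV}, you must carry out the contractions far enough to exhibit this cancellation (and to identify the residual simple pole as $[\xi,\xi]^{ch}=0$), rather than merely bounding the order of the singularity.
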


\begin{proof} In fact, writing $\xi^{ch}:=G_{(0)}\xi$, the authors show that we have the OPE $$\xi^{ch}(z)\chi^{ch}(w)\sim\frac{[\xi,\chi]^{ch}(w)}{(z-w)}.$$ This an infinitesimal version of the main construction of \emph{loc. cit.} which canonically associates vertex automorphisms to coordinate transformations on $X$. The case where $\xi=\chi$ is an obvious consequence. \end{proof}

We will henceforth assume that $X$ carries an action of $\mathbf{G}_{m}$ for which $\mathbf{vol}$ has weight $b$ and $f$ has weight $a$. \begin{example} We can take an arbitrary smooth projective $Y$ and let $X=K_{Y}$ be the total space of the canonical bundle on $Y$. This is endowed with a $\mathbf{G}_{m}$ action scaling the fibres, and a volume form of degree $1$. We can then take our function $f$ to be some pluri-anticanonical section, $$s\in\Gamma(Y,\omega^{\otimes -a}_{Y})=\Gamma\big(K_{Y},\mathcal{O}\big)^{(a)}.$$ More generally, we can incorporate an action of $\mathbf{G}_{m}$ on $X$ into the above.\end{example}

\begin{definition} For our given $\mathbf{G}_{m}$ action on $X$, we denote by $\xi$ the vector field on $X$ obtained by differentiation. \end{definition}

\begin{theorem} Let $f$ be homogeneous of weight $a$ for a $\mathbf{G}_{m}$ action on $(X,\mathbf{vol})$, with $\mathbf{vol}$ of weight b. Then the currents, $$^{f}L=L,$$ $$^{f}G=G,$$ $$^{f}J=J+\frac{1}{a}G_{(0)}\xi,$$ $$^{f}Q=Q-\frac{1}{a}\delta\xi,$$ are $\partial^{ch}_{f}$-closed and endow $\mathbf{crit}^{ch}_{f}$ with a topological structure at the rank $$d:=\Big(D-\frac{2b}{a}\Big).$$  \end{theorem}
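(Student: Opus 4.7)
The plan has two stages: first, verify $\partial^{ch}_{f}$-closedness of the four new currents; second, verify that they satisfy the defining OPEs of $\mathbf{top}\{d\}$.

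For closedness, $^{f}L = L$ and $^{f}G = G$ are closed by lemma 3.5. For $^{f}J$ and $^{f}Q$ the key input is $\partial^{ch}_{f}\xi = \{f,\xi\} = \xi(f) = af$, obtained from lemma 3.3 together with the $\mathbf{G}_{m}$-homogeneity of $f$. Since $\partial^{ch}_{f}G = \partial^{ch}_{f}L = 0$ by lemma 3.5, the derivation $\partial^{ch}_{f}$ commutes with $G_{(0)}$ and with $\delta = L_{(0)}$; hence $\partial^{ch}_{f}(G_{(0)}\xi) = a G_{(0)}f$ and $\partial^{ch}_{f}(\delta\xi) = a\delta f$. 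Combining with $\partial^{ch}_{f}J = -G_{(0)}f$ and $\partial^{ch}_{f}Q = \delta f$ (lemma 3.5) yields closedness of $^{f}J$ and $^{f}Q$.

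For the OPE structure, the decisive computation is $^{f}Q(z)\,^{f}G(w)$, whose cubic pole determines the rank. Write $\xi^{ch} := G_{(0)}\xi$. Lemma 3.4 gives $$G(z)\xi(w) \sim \frac{\xi^{ch}(w)}{z-w} + \frac{b}{(z-w)^{2}}.$$ Applying super quasi-symmetry, and using that $b$ is $\delta$-closed as a constant, one obtains $\xi(z)G(w) \sim \xi^{ch}(w)/(z-w) - b/(z-w)^{2}$. Differentiating in $z$ yields $\delta\xi(z)G(w) \sim -\xi^{ch}(w)/(z-w)^{2} + 2b/(z-w)^{3}$. Substituting into $^{f}Q(z)\,^{f}G(w) = Q(z)G(w) - (1/a)\delta\xi(z)G(w)$, and invoking the rank-$D$ OPE $Q(z)G(w) \sim L/(z-w) + J/(z-w)^{2} + D/(z-w)^{3}$, the $\xi^{ch}$-contribution merges with $J$ to give $^{f}J$, and the cubic pole becomes $(D - 2b/a)/(z-w)^{3} = d/(z-w)^{3}$.

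The remaining OPEs follow by analogous manipulations. Lemma 3.7 gives $\xi^{ch}(z)\xi^{ch}(w) \sim 0$, and a direct local-coordinate check shows $Q(z)\xi(w) \sim 0$, from which all OPEs of $\delta\xi$ with $Q$ or with itself are regular and $^{f}Q(z)\,^{f}Q(w) \sim 0$ is immediate. The Heisenberg relation $^{f}J(z)\,^{f}J(w) \sim d/(z-w)^{2}$ comes from $J(z)\xi^{ch}(w) \sim -b/(z-w)^{2}$, checked via $J_{(1)}\xi^{ch} = -G_{(1)}\xi = -b$ using Borcherds' formula and lemma 3.4. The anomalous primary OPE $L(z)\,^{f}J(w) \sim -d/(z-w)^{3} + \,^{f}J/(z-w)^{2} + \delta\,^{f}J/(z-w)$ follows from $L(z)\xi^{ch}(w) \sim 2b/(z-w)^{3} + \xi^{ch}/(z-w)^{2} + \delta\xi^{ch}/(z-w)$, with the cubic anomaly $2b$ again traceable to $G_{(1)}\xi = b$. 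The main technical obstacle is the careful application of super quasi-symmetry to reverse $G(z)\xi(w)$: the central constant $b$ contributes with the opposite sign in the quadratic pole, and it is exactly this sign flip that produces the rank shift $d = D - 2b/a$.
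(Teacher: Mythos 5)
Your proposal follows essentially the same route as the paper's proof: closedness is reduced to $\partial^{ch}_{f}\xi=af$ (homogeneity of $f$) together with Lemma 3.5, the rank is extracted from $J_{(1)}(G_{(0)}\xi)=-G_{(1)}\xi=-b$ plus self-locality of $G_{(0)}\xi$, and the $^{f}Q$--$^{f}G$ OPE is handled by skew-symmetry using the crucial vanishing $\delta(G_{(1)}\xi)=\delta(b)=0$; you in fact go a little further by sketching the $^{f}Q$--$^{f}Q$ and $L$--$^{f}J$ OPEs, which the paper explicitly declines to verify as routine. One small caution: since $\partial^{ch}_{f}$ and $G_{(0)}$ are both odd, the vanishing of $\partial^{ch}_{f}G$ means they \emph{anti}commute rather than commute, so the sign in $\partial^{ch}_{f}(G_{(0)}\xi)=\pm aG_{(0)}f$ deserves a second look (the paper records $-aG_{(0)}f$), though this bookkeeping does not affect the conclusion that $^{f}J$ is $\partial^{ch}_{f}$-closed.
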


\begin{proof} We first prove closedness with respect to $\partial^{ch}_{f}$. Lemma 3.5 tells us that $^{f}=L$ and $^{f}G=G$ are already closed and that to check closedness of $^{f}J$ and $^{f}Q$ we must verify the relations $$\partial^{ch}_{f}G_{(0)}\xi=-aG_{(0)}f, $$ $$\partial^{ch}_{f}(\delta\xi)=a\delta f.$$ Both of these relations follow easily from $\partial^{ch}_{f}\xi=af$, which can be checked in local coordinates to be the relation $\operatorname{Lie}_{\xi}f=af$, expressing homogeneity of $f$ with respect to the $\mathbf{G}_{m}$ action. 

We will not verify all of the OPEs required to show that the above currents generate an action as it would be rather tedious and at times routine - for example note that it is easy to see that the odd currents $^{f}G$ and $^{f}Q$ are Virasoro primaries of weights $2$ and $1$ respectively, and that they are of degrees $-1$ and $+1$ respectively with respect to $^{f}J_{(0)}$.

We choose instead to compute the charge of the current $^{f}J$, whence the rank $d$ of the putative topological structure, and to verify that this is consistent with the OPE between $^{f}Q$ and $^{f}G$.

First we compute the charge of $^{f}J$, indeed we have $^{f}J:=J+\frac{1}{a}G_{(0)}\xi$. We have $J_{(0)}G_{(0)}\xi=0$ and lemma 3.6 tells us that $G_{(0)}\xi$ is self local. Further we have $J_{(1)}J=D$ by \cite{MSV}. Now we compute $J_{(1)}G_{(0)}\xi=[J_{(1)},G_{(0)}]=-G_{(1)}\xi=-b.$ Similarly we have $(G_{(0)}\xi)_{(0)}J=-b.$ Putting the above together we deduce the OPE $$^{f}J(z)\,^{f}J(w)\sim \frac{d}{(z-w)^{2}}\,\,;\,\,d=D-\frac{2b}{a}.$$

Let us now check that we have the OPE $$^{f}Q(z)G(w)\sim\frac{L(w)}{(z-w)}+\frac{^{f}J(w)}{(z-w)^{2}}+\frac{d}{(z-w)^{3}}.$$ First let us note that as $^{f}Q$ and $Q$ differ by an element in the image of $\delta$ we have not changed the zero mode. As such, we start with the term in the OPE of order $2$ along $z=w$.

We have $Q_{(1)}G=J$ by \cite{MSV}, and so we must show that $$\frac{1}{a}(-\delta\xi)_{(1)}G=\frac{1}{a}G_{(0)}\xi.$$ This follows from $(\delta\xi)_{(1)}=-\xi_{(0)}$ and quasi-symmetry of OPEs by noting, crucially, that $\delta(G_{(1)}\xi)=\delta(b)=0$ from our assumption that the volume form is homogeneous. Finally we will check that we obtain $d$ as the order $3$ term along $z=w$ in the OPE. Indeed this follows from an argument exactly as above, where we pick up a factor of $2$ in front of $b$ now from translation equivariance of OPEs; $(\delta\xi)_{(2)}=-2\xi_{(1)}.$

\end{proof}

\begin{remark} Prior even to the rigorous mathematical introduction of $\Omega^{ch}_{X}$ in \cite{MSV}, the above is essentially proven in \cite{Witt}, albeit in more physical language, for the case of $f=x^{a}$ on $\mathbf{A}^{1}$ as well as for products of this example. In this case the we always have $b=D$ and we obtain $d=D(1-\frac{2}{a}\,)$ as in \emph{loc. cit.}, where the rank arises most naturally from $$\Big(1-\frac{1}{a}\,\Big)^{2}-\Big(\,\frac{1}{a}\,\Big)^{2}=1-\frac{2}{a}.$$\end{remark}

\section{Additional Remarks and Computations}

\subsection{Characters} Recall that if $V$ is a representation of $\mathbf{top}\{d\}$, we define the character by  $$\operatorname{ch}_{q;z}\{V\}:=\operatorname{tr}(q^{L_{(1)}}(-z)^{-J_{(0)}}),$$ where we caution that this is a slightly non-standard normalization for characters. If $\mathcal{V}$ is a sheaf of representations of $\mathbf{top}\{d\}$ then we define the $\operatorname{ch}_{q;z}(\mathcal{V})$ as the character of the hypercohomology of $\mathcal{V}$. Let us see how this can be computed in the the case of $X=K_{Y}$, where $Y$ is a smooth projective variety of dimension $D-1$ admitting a $\mathbf{G}_{m}$ action with isolated fixed point set $\operatorname{Fix}$, and $f$ is a function of weight $a$.

 We will set $$\theta(z;q):=\theta(z):=\prod_{i=0}^{\infty}(1-q^{i}z)(1-q^{i+1}z^{-1})(1-q^{i+1})$$ as usual. \begin{definition} Given a finite set of parameters $\underline{\alpha}:=\{\alpha_{i}\}_{i}$, we define $$\Theta\{\underline{\alpha}\}:=\Theta\{\underline{\alpha}\}(z;q):=\prod_{i}\frac{\theta(z^{1+\alpha_{i}})}{\theta(z^{\alpha_{i}})}.$$ If $\underline{\alpha}$ is a singleton then we write $\Theta\{\alpha\}:=\Theta\{\underline{\alpha}\}$. \end{definition}

 For each $y\in F$, let  $w_{j}(y)$ be the weights of the $\mathbf{G}_{m}$-action on $T_{y}Y$ $\underline{w}(y)$. Now define $\underline{w}(y)$ to be the tuple $$\underline{w}(y):=\frac{1}{a}\{w_{1}(y),...,w_{d}(y)\}.$$ Finally let $$w_{\operatorname{tot}}(y):=\frac{1}{a}\sum_{i}w_{i}(y).$$

We state the following simple computation as a theorem.

\begin{theorem}  $$\operatorname{ch}_{q;z}\{\mathbf{crit}^{ch}_{f}\}=\sum_{y\in\operatorname{Fix}}\Theta\{\underline{w}(y)\}\Theta\{- w_{\operatorname{tot}}(y)\}.$$ \end{theorem}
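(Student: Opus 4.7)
The plan is to combine three ingredients: a cancellation that lets us ignore the differential for character purposes, $\mathbf{G}_m$-equivariant localization on $X = K_Y$, and a Fock-space computation on each formal neighborhood of a fixed point.

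First, observe that $\partial^{ch}_{f}$ shifts $^f J_{(0)}$-degree by $-1$ and preserves $L_{(1)}$-degree. Since the character $\operatorname{tr}(q^{L_{(1)}}(-z)^{-\,^f J_{(0)}})$ includes the sign $(-z)^{-\,^f J_{(0)}}$, the trace of $\partial^{ch}_{f}$ as a graded endomorphism of $\Theta^{ch}_X$ vanishes, so
\[
\operatorname{ch}_{q;z}\{\mathbf{crit}^{ch}_f\}=\operatorname{ch}_{q;z}\{\Theta^{ch}_X\}
\]
as a $\mathbf{G}_m$-equivariant character, where the formula $^f J = J + \tfrac{1}{a}G_{(0)}\xi$ of Theorem 3.1 means precisely that the variable $z$ simultaneously tracks the old $J_{(0)}$-grading and the $\mathbf{G}_m$-weights rescaled by $\tfrac{1}{a}$. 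This reduces the task to a super-character computation on the $\mathbf{G}_m$-equivariant sheaf $\Theta^{ch}_X$, with the differential forgotten.

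Second, I would apply $\mathbf{G}_m$-equivariant localization to the hypercohomology. The fixed locus of the $\mathbf{G}_m$-action on $X = K_Y$ is exactly $\operatorname{Fix}\subset Y$ sitting inside the zero section (the fibre-scaling forces localization to the zero section, and the induced action on $Y$ pins us down to $\operatorname{Fix}$). A standard Atiyah--Bott-type argument then reduces the hypercohomology character to a sum over $y \in \operatorname{Fix}$ of the local character on the formal neighborhood $\hat{X}_y$. Under a $\mathbf{G}_m$-equivariant identification $\hat{X}_y\cong \operatorname{Spf}\mathbf{C}[[x^1,\dots,x^D]]$, the tangent weights are the $w_j(y)$ for $j=1,\dots,D-1$ coming from $T_yY$, together with one further weight $-\sum_j w_j(y) = -a\cdot w_{\operatorname{tot}}(y)$ coming from the fibre of $K_Y$.

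Third, on each such formal neighborhood the sheaf $\Theta^{ch}_{\hat{X}_y}$ splits as a tensor product over the $D$ coordinate directions, so the local character factors accordingly. For a single $\hat{\mathbf{A}}^1$-factor with coordinate of normalized weight $\alpha=w/a$, one collects the four Fock generators $x,y,\phi,\psi$ of conformal weights $(0,1,1,0)$; their $^f J$-charges are $\alpha,-\alpha,\alpha-1,1-\alpha$ respectively, determined by $^f J=J+\tfrac{1}{a}G_{(0)}\xi$ acting as the sum of $J_{(0)}$ and the Euler operator divided by $a$. Assembling the four infinite products over modes $\delta^n(\,\cdot\,)$ and comparing with the infinite-product expansion of $\theta$ reproduces exactly the ratio $\Theta\{\alpha\}(z;q)=\theta(z^{1+\alpha})/\theta(z^\alpha)$. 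Multiplying the $D-1$ factors from $T_yY$ with the single fibre factor gives $\Theta\{\underline{w}(y)\}\,\Theta\{-w_{\operatorname{tot}}(y)\}$, and summing over $\operatorname{Fix}$ yields the claimed formula.

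The main obstacle is really the third step: correctly bookkeeping the non-integer $^f J$-charges of $\phi$ and $\psi$ (the integer part coming from $J_{(0)}$ and the non-integer part from the $\mathbf{G}_m$-weight divided by $a$), so that after combining the bosonic products from $x,y$ and the fermionic products from $\phi,\psi$ the surviving infinite products line up term-by-term with the ratio $\theta(z^{1+\alpha})/\theta(z^\alpha)$. Localization in step two is standard once the equivariant structure is in place, and step one is formal.
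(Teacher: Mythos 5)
Your proposal is correct and follows essentially the same route as the paper: discard the differential via the sign in $(-z)^{-J_{(0)}}$ (Euler-characteristic invariance), localize \`{a} la Atiyah--Bott to $\operatorname{Fix}$, and do a per-coordinate Fock-space computation producing the ratios $\theta(z^{1+\alpha})/\theta(z^{\alpha})$, the only cosmetic difference being that the paper keeps an auxiliary equivariant variable $t$ and substitutes $t=z^{1/a}$ at the end rather than folding the rescaling by $\tfrac{1}{a}$ in from the start. The one technical point you gloss over, which the paper flags, is that $\Theta^{ch}_{K_Y}$ is not quasi-coherent, so localization must be applied to the associated graded of the filtration from subsection 3.10 of the chiral de Rham paper.
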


\begin{proof} We first compute the triply graded character of $\Theta^{ch}_{K_{Y}}$, where we include an extra grading corresponding to the $\mathbf{G}_{m}$ action. This introduces an extra variable $t$ to the character, and is easily computed by the Atiyah Bott localization theorem, cf. \cite{AB}. In fact $\Theta^{ch}_{K_{Y}}$ is not quasi-coherent so we should apply the localization theorem to the associated graded with respect to the filtration defined in 3.10 of \cite{MSV}. Note that $w_{\operatorname{tot}}$ terms arise from the contributions of the fibre direction to the $\mathbf{G}_{m}$-module $T_{y}K_{Y}$.

A routine computation produces $$\sum_{y\in\operatorname{Fix}}\Big(\prod_{i}^{D-1}\frac{\theta(zt^{w_{i}(y)})}{\theta(t^{w_{i}(y)})}\Big)\frac{\theta(zt^{-w_{\operatorname{tot}}(y)})}{\theta(t^{-w_{\operatorname{tot}}(y)})},$$ 

Recalling that $$^{f}J:=J+\frac{1}{a}G_{(0)}\xi,$$ and noting that the $0$-mode of $G_{(0)}\xi$ is the Euler operator corresponding to the grading, we see that we must now set $t=z^{1/a}$ to obtain the desired character.\end{proof}

\subsection{BV Quantization}

We will drop the requirement that $X$ come equipped with a $\mathbf{G}_{m}$ action henceforth, so that the discussion below has nothing to do with topological vertex algebras. 

Recall that $\mathbf{crit}_{f}$ has the structure of a $(-1)$ symplectic variety in the sense of \cite{PTVV}. Then, following the discussion in the introduction to \cite{Co}, we can speak of \emph{BV quantizations} of $\mathbf{crit}_{f}$. Further, a natural such is provided by a volume form on $X$, in this case $\Delta_{\mathbf{vol}}$ is a quantization. Recall that this means that $\Delta_{\mathbf{vol}}$ is a second order differential operator whose failure to be a derivationproduces the bracket on $\mathcal{O}(\mathbf{crit}_{f})$ dual to the shifted symplectic form. Associated to this quantization is the \emph{BV complex}, defined as $$\mathbf{BV}_{f}:=\big(\bigwedge\Theta_{X}((\hbar)),\{f,-\}+\hbar\Delta_{\mathbf{vol}}).$$ Now, there is no problem in defining a chiral version of this, $$\mathbf{BV}^{ch}_{f}:=(\Theta^{ch}_{X}((\hbar)),\partial^{ch}_{f}+\hbar G_{(1)}).$$ This is graded by conformal weight in the obvious way and the weight $0$ subspace is precisely $\mathbf{BV}_{f}$. \begin{remark} In the chiral case one can consider $G_{(1)}$ as quantizing what we referred to in \cite{Bo} as a \emph{Gerstenhaber vertex algebra}. \end{remark}  

We might say that $\mathbf{crit}^{ch}_{f}$ is a \emph{Hodge} type object, and $\mathbf{BV}^{ch}_{f}$ is a \emph{de Rham} analogue of it. As such, one one might expect, by analogy with the case of the chiral de Rham complex endowed with its chiral de Rham differential, that $\mathbf{BV}^{ch}_{f}$ does not differ homologically from its non chiral counterpart $\mathbf{BV}^{ch}_{f}$. This is indeed the case, we have the following simple theorem.

\begin{theorem} The inclusion $\mathbf{BV}_{f}\rightarrow\mathbf{BV}^{ch}_{f}$ is a quasi-isomorphism. \end{theorem}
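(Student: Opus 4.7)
My plan has three steps. First, both $\partial^{ch}_f$ and $\hbar G_{(1)}$ preserve conformal weight (since $f$ has weight zero and $G$ has weight two), so $\mathbf{BV}^{ch}_f$ decomposes as a direct sum of sub-complexes indexed by weight. The inclusion in question is tautologically the inclusion of the weight zero summand, so it suffices to show that each weight $n > 0$ summand is acyclic.

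The second and key step is to construct an explicit contracting homotopy on each positive-weight summand using the $Q$-current of the mirror-twisted topological structure on $\Theta^{ch}_X$. I would set
\[ H \;:=\; \frac{1}{n\hbar}\, Q_{(0)} \]
on the weight $n$ summand (well-defined since $\hbar$ is invertible in $((\hbar))$ and $n$ is a nonzero scalar) and compute
\[ [d, H] \;=\; \frac{1}{n\hbar}\bigl( [\partial^{ch}_f, Q_{(0)}] + \hbar\,[G_{(1)}, Q_{(0)}] \bigr). \]
The second bracket is the standard topological identity $[G_{(1)}, Q_{(0)}] = L_{(1)}$ from the Remark after Definition 2.7, contributing $n\hbar/(n\hbar) = 1$. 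The first bracket vanishes: by Borcherds it equals $(\partial^{ch}_f Q)_{(0)}$, which by Lemma 3.5 equals $(\delta f)_{(0)} = 0$. Thus $[d, H] = 1$, so the weight $n > 0$ summand is null-homotopic.

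The hard part, insofar as there is one, is checking that the relation $[G_{(1)}, Q_{(0)}] = L_{(1)}$ remains valid in the mirror-twisted topological structure. Since the mirror involution is an automorphism of the super VOA $\mathbf{top}\{D\}$ (only the Virasoro vector is altered), the OPE relations among the twisted currents $^{\Theta}L$, $^{\Theta}G$, $^{\Theta}Q$ match those in $\mathbf{top}\{D\}$ verbatim; I would verify this either by a general descent argument or by direct computation in the local coordinates of Section 3. Given this, the inclusion $\mathbf{BV}_f \hookrightarrow \mathbf{BV}^{ch}_f$ is the identity at weight zero and lands in a contractible complex at positive weights, hence is a quasi-isomorphism.
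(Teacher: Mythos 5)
Your proof is correct, and it rests on exactly the same two identities as the paper's: $[\partial^{ch}_{f},Q_{(0)}]=(\partial^{ch}_{f}Q)_{(0)}=(\delta f)_{(0)}=0$ (via Lemma 3.5) and $[G_{(1)},Q_{(0)}]=L_{(1)}$. The difference is in the packaging. The paper runs the $\hbar$-adic spectral sequence: its $E^{1}$ page is $H(\mathbf{crit}^{ch}_{f})((\hbar))$ with differential $\hbar G_{(1)}$, onto which $Q_{(0)}$ descends and contracts everything of positive conformal weight, so the comparison with $\mathbf{BV}_{f}$ happens at $E^{2}$. You instead split the total complex into conformal-weight eigensummands (legitimate, since $\partial^{ch}_{f}$ is the zero mode of a weight-one field and $G_{(1)}$ the $1$-mode of a weight-two field, both weight-preserving, which is why the paper can say the weight-$0$ subspace is precisely $\mathbf{BV}_{f}$) and exhibit $\tfrac{1}{n\hbar}Q_{(0)}$ as an honest contracting homotopy for the \emph{total} differential on the weight-$n$ piece; the exact vanishing of $[\partial^{ch}_{f},Q_{(0)}]$, rather than vanishing only up to homotopy, is what makes this work. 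Your version is, if anything, a bit more economical: it gives an explicit contraction and sidesteps any convergence discussion for a spectral sequence over $((\hbar))$. The point you flag as the hard part resolves exactly as you expect: the mirror involution is by construction an automorphism of $\mathbf{top}\{D\}$ as a super vertex algebra, so the twisted currents of $\Theta^{ch}_{X}$ satisfy the defining OPEs verbatim and $[G_{(1)},Q_{(0)}]=L_{(1)}$ holds for them, with $L_{(1)}$ the grading operator actually in use (the one for which $\psi$ has weight $0$); the paper applies this identity to the twisted currents without comment.
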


\begin{proof} We consider the evident $\hbar$-adic spectral sequence computing homology of $\mathbf{BV}^{ch}_{f}$. Then the $E^{1}$ page is $H(\mathbf{crit}^{ch}_{f})((\hbar))$ with the differential $\hbar G_{(1)}$. Now let us note that $Q_{(0)}$ descends to an operator on $H(\mathbf{crit}^{ch}_{f}).$ Indeed we have seen in lemma 3.5. that $\partial^{ch}_{f}(Q)=\delta f$, whence we have $$[\partial^{ch}_{f},Q_{(0)}]=(\delta f)_{(0)}=0.$$ Now the result follows immediately from $[Q_{(0)},G_{(1)}]=L_{(1)}$. \end{proof}

\begin{remark}Recalling that the exponentially twisted de Rham complex can be computed as the formal Laplace transform of the Gauss Manin system associated to $f$, it would be interesting to compare the above with the \emph{chiral} Gauss Manin connection, recently introduced in \cite{MST}. \end{remark}

\end{document}